\documentclass[12pt]{amsart}

\usepackage{amsmath, amssymb, amsthm}
\usepackage{enumerate,hyperref}
\hypersetup{
    pdftitle=   {Number of cliques in graphs with a forbidden subdivision},
   pdfauthor=  {Choongbum Lee, Sang-il Oum}
}

\theoremstyle{plain}
\newtheorem{thm}{Theorem}[section]
\newtheorem*{thm*}{Theorem}
\newtheorem{prop}[thm]{Proposition}
\newtheorem{lem}[thm]{Lemma}
\newtheorem*{lem*}{Lemma}

\theoremstyle{remark}

\newcommand\abs[1]{\lvert #1\rvert}

\title{Number of cliques in graphs with a forbidden subdivision}
\author{Choongbum Lee}
\address{Department of Mathematics, MIT, Cambridge, MA 02139-4307, USA} 
\email{cb\_lee@math.mit.edu}
\thanks{C.~L.~is supported by NSF Grant DMS-1362326.}
\author{Sang-il Oum}
\address{Department of Mathematical Sciences, KAIST, Daejeon, 34141,
  South Korea}
\email{sangil@kaist.edu}
\thanks{S.~O.~is supported by Basic Science Research
  Program through the National Research Foundation of Korea (NRF)
  funded by  the Ministry of Science, ICT \& Future Planning
  (2011-0011653).}
\date\today

\begin{document}

\begin{abstract}
We prove that for all positive integers $t$, every $n$-vertex graph 
with no $K_t$-subdivision has at most $2^{50t}n$ cliques.
We also prove that asymptotically, such graphs contain at most $2^{(5+o(1))t}n$ cliques, where $o(1)$ tends to zero as $t$ tends to infinity.
This strongly answers a question of D.~Wood %
asking if the number of cliques in $n$-vertex graphs with no $K_t$-minor
is at most $2^{ct}n$ for some constant $c$.

\keywords{minor, topological minor, subdivision, clique}
\end{abstract}

\maketitle

\section{Introduction}

A \emph{clique}  of a graph is a set of pairwise adjacent vertices.
 A graph $H$ is a \emph{minor} of a graph $G$ if $H$ can be formed from
$G$ by deleting edges and vertices and by contracting edges. An \emph{$H$-subdivision}
of a graph $G$ is a subgraph of $G$ that can be formed from an isomorphic copy of $H$ by replacing edges with
vertex-disjoint (non-trivial) paths.
Trivially, if a graph has an $H$-subdivision, then it has an $H$-minor. But the converse is not true in general.

The problem of determining the maximum number of edges in 
graphs with no $K_t$-minor or no $K_t$-subdivision is a
well-studied problem in extremal graph theory:
Kostochka~\cite{Kostochka} and Thomason~\cite{Thomason1984} proved that 
graphs with no $K_t$-minor have average degree at most $c t \sqrt{\ln t}$, 
and Bollob\'as and Thomason~\cite{BoTh}, and independently,
Koml\'os and Szemer\'edi~\cite{KoSz} proved that graphs 
with no $K_t$-subdivision have average degree at most $c't^2$,
where $c$ and $c'$ are some absolute constants not depending on $t$
(in fact, a theorem of Thomas and Wollan~\cite{TW2005} can be used to 
show that $c' \le 10$, see, \cite[Theorem 7.2.1]{Diestel2010}).
A graph is \emph{$d$-degenerate} if
all its induced subgraphs contain a vertex of degree at most $d$. 
The results mentioned above straightforwardly imply that
graphs with no $K_t$-minor are $ct\sqrt{\ln t}$-degenerate, 
and graphs with no $K_t$-subdivision are $c't^2$-degenerate.

We study a related problem of determining the maximum number of 
cliques in graphs with no $K_t$-minor or no $K_t$-subdivision. 
Our work can be viewed as an extension of 
Zykov's theorem~\cite{Zykov} that establishes a bound on the number of cliques in graphs with no $K_t$ subgraphs. 
For planar graphs, 
Papadimitriou and Yannakakis~\cite{PaYa81} and Storch~\cite{Storch2006} proved a linear upper bound and finally Wood~\cite{Wood} determined the exact upper bound $8n-16$ for $n$-vertex planar graphs.
Dujmovi\'c et al.~\cite{DFJSW2011} generalized this result to graphs on surfaces.

For graph with no $K_t$-minors, Reed and Wood~\cite{ReWo09} 
and Norine et al.~\cite{NSTW06} 
obtained an upper bound on the number of cliques  
by using the fact that an $n$-vertex $d$-degenerate graph with $n\ge d$ 
has at most $2^d (n - d + 1)$ cliques. By the results mentioned above, 
this implies that graphs 
with no $K_t$-minor have at most $2^{ct\sqrt{\ln t}}n$ cliques
and graphs with no $K_t$-subdivision have at most $2^{10t^2}n$ cliques.
Wood~\cite{Wood} then asked whether there exists a constant $c$ for which 
every $n$-vertex graph with no $K_t$-minor has at most $2^{ct} n$ cliques.
If true, then the bound would be best possible up to the constant $c$ 
in the exponent, since the $(t-2)$-th power of a path on $n$ vertices has no $K_t$-minor
and contains $2^{t-2} (n-t+3)$ cliques (including the empty set).
See Section~\ref{sec:remarks} for an alternative construction.

The results of Wood were later improved to 
$2^{c t \ln \ln t} n$ (for graphs with no $K_t$-minor)  and $2^{ct\ln t}$ (for graphs with no $K_t$-subdivision)
by Fomin, Oum, and Thilikos~\cite{FoOuTh}.
In this paper, we settle Wood's question by proving the bound not only for
graphs with no $K_t$-minor, but also for graphs with no $K_t$-subdivision.

\begin{thm} \label{thm:main}
  For all positive integers $t$, every $n$-vertex graph with no $K_t$-subdivision has at most $2^{50t} n$ cliques.
\end{thm}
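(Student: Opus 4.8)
The plan is to induct on $t$, with the inductive hypothesis that every $K_\tau$-subdivision-free graph on $m$ vertices has at most $2^{50\tau}m$ cliques for all $\tau<t$. The engine driving the induction is the following \emph{common-neighborhood reduction}: if $Q$ is a $q$-clique of a $K_t$-subdivision-free graph $G$ and $N(Q)=\bigcap_{v\in Q}N(v)$ is its common neighborhood, then $G[N(Q)]$ contains no $K_{t-q}$-subdivision. Indeed, a $K_{t-q}$-subdivision inside $N(Q)$ has $t-q$ branch vertices, each adjacent to every vertex of $Q$; adjoining the $q$ pairwise adjacent vertices of $Q$ as further branch vertices and using the single edges from $Q$ to the branch vertices (and within $Q$) as the missing linking paths produces a $K_t$-subdivision in $G$, a contradiction. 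The case $q=1$ says that every neighborhood is $K_{t-1}$-subdivision-free, which is what lets the induction descend.

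First I would set up the basic counting. Writing $a_k$ for the number of $k$-cliques of $G$, two elementary double counts over the $s$-cliques $Q$ give $\sum_{|Q|=s}|N(Q)|=(s+1)a_{s+1}$ and $\sum_{|Q|=s}|E(G[N(Q)])|=\binom{s+2}{2}a_{s+2}$. By the reduction each $G[N(Q)]$ is $K_{t-s}$-subdivision-free, so the Thomas--Wollan edge bound ($c'\le 10$) gives $|E(G[N(Q)])|<5(t-s)^2|N(Q)|$. Combining the three relations yields the clean recursion
\[
a_{s+2}<\frac{10(t-s)^2}{s+2}\,a_{s+1},\qquad a_1=n,
\]
which already controls every $a_k$, and hence $\sum_k a_k$.

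The difficulty is that this recursion is not good enough. The factor $10(t-s)^2$ reflects the genuine $\Theta(t^2)$ degeneracy of $K_t$-subdivision-free graphs, and multiplying roughly $t$ such factors produces a bound of the shape $\bigl(t!/(t-k+1)!\bigr)^2/k!$, whose dominant term sits near $k\approx t$ and gives only $2^{(1+o(1))t\log_2 t}n$ --- exactly the Fomin--Oum--Thilikos bound. So the entire content of Theorem~\ref{thm:main} is to remove the $\log t$ factor, i.e.\ to show that the \emph{effective} degeneracy seen by cliques is linear in $t$ rather than quadratic. The main obstacle, and the step I expect to be genuinely hard, is precisely this: the extremal edge-dense ($\Theta(t^2)$-degenerate) graphs are clique-poor, so the edge bound above is wasteful exactly when $G[N(Q)]$ is clique-rich. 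Turning this intuition into a theorem requires showing that a clique-rich neighborhood of size merely super-linear in $t$ already forces a $K_t$-subdivision; the natural tool is a Koml\'os--Szemer\'edi style construction, in which good expansion upgrades average degree $d$ into a $K_t$-subdivision with $t=\Omega(d/\log d)$, so that the relevant local parameter drops to $\widetilde O(t)$.

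Granting such a linearization, the count should close cleanly: I would order the vertices by a suitable (clique-sensitive) elimination scheme so that each vertex is charged only the cliques contained in a back-structure of effective degeneracy $O(t)$, whence the cliques through each vertex number at most $\sum_{j}\binom{O(t)}{j}\le 2^{O(t)}$, and summing over the $n$ vertices gives $2^{O(t)}n$. Tracking constants crudely should yield the stated $2^{50t}n$; optimizing the expansion step together with the entropy-type estimate $\sum_j\binom{\alpha t}{j}\approx 2^{\alpha H(1/\alpha)\,t}$, and choosing $\alpha$ so that $\alpha H(1/\alpha)\to 5$, should deliver the refined asymptotic $2^{(5+o(1))t}n$.
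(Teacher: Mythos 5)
Your preliminary reductions are sound: the common-neighbourhood observation (that $G[N(Q)]$ is $K_{t-q}$-subdivision-free for a $q$-clique $Q$) is correct, the two double counts giving $a_{s+2}<\frac{10(t-s)^2}{s+2}\,a_{s+1}$ are correct, and you are right that this recursion only reproduces the $2^{O(t\log t)}n$ bound of Fomin, Oum, and Thilikos. But the proof stops exactly where the theorem begins. The entire content of the result is the ``linearization'' you ask to be granted, and what you offer in its place --- a Koml\'os--Szemer\'edi-style expansion argument showing that a clique-rich neighbourhood of size $\widetilde O(t)$ forces a $K_t$-subdivision --- is neither proved nor even precisely formulated; taken at face value, extracting a $K_t$-subdivision from average degree $d$ with $t=\Omega(d/\log d)$ would reintroduce a logarithmic loss rather than remove one. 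Likewise the closing paragraph (``Granting such a linearization, the count should close cleanly'') is a plan, not an argument: no ``clique-sensitive elimination scheme'' is defined, no notion of ``effective degeneracy'' is made precise, and no mechanism is given for charging cliques to a back-structure of size $O(t)$. So the proposal has a genuine gap at its central step.

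For comparison, the paper resolves this obstacle by a different and more elementary route. It organizes minimum-degree-first clique enumeration into a rooted clique search tree and keeps only those branches along which the candidate sets shrink by a factor of $\frac{9}{10}$ while staying above $\sqrt{10}\,t$; the truncated tree then has height $O(\log t)$ and only $2^{o(t)}n$ nodes. At a boundary node where shrinking stops, the induced graph $G_a$ on the remaining candidates has minimum degree at least $\frac{9}{10}|V(G_a)|$, and the key lemma (Lemma~\ref{lem:subdiv}) shows that such a graph with no $K_t$-subdivision has at most $\frac{t^2}{5}$ vertices and is $(1-\frac{m}{2t^2},\frac{20}{11}t)$-locally sparse: from any dense subset one extracts a $t$-set $Y$ with fewer than $\frac{m}{4}$ non-edges and greedily routes each non-edge through a distinct common neighbour, which exists because every pair of vertices has at least $\frac{4}{5}m$ common neighbours. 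A second lemma then bounds the number of cliques of such a small, locally sparse graph by $2^{5t}$. Nothing playing the role of these two lemmas appears in your write-up, so the argument as given does not establish the theorem; if you want to pursue your inductive framework, you would still need to supply a quantitative statement of this kind and its proof.
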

Our proof also implies that such graphs have at most $2^{(5+o(1))t}n$ cliques.

\section{Proof of theorem}

One can enumerate all cliques of a given graph by choosing vertices one at a time, and
recursively exploring its neighbors. To be more precise, first choose a vertex $v_1$ 
of minimum degree and explore all cliques that contain $v_1$ by recursively 
applying the algorithm to the graph induced on 
the set $N(v_1)$. Once all cliques containing $v_1$ has been explored, remove $v_1$ from the graph,
choose a vertex $v_2$ of minimum degree in the remaining graph and repeat the algorithm. The algorithm enumerates 
each clique of the graph exactly once, since the $i$-th step of the algorithm 
enumerates all cliques that contain $v_i$ but do not contain any vertex from 
$\{v_1, \ldots, v_{i-1}\}$ (where $v_j$ is the vertex chosen at the $j$-th step).
We emphasize that we always choose the vertex of minimum degree within the remaining graph since
this choice blends particularly well with sparse graphs. This algorithm has been used
in various previous works (see e.g.~\cite{KlWi}).

This simple algorithm immediately implies a reasonable result.
Since $K_t$-minor free graphs are $c t \sqrt{\ln t}$-degenerate, the 
vertex $v_i$ chosen at the $i$-th step of the algorithm above will have
degree at most $c t \sqrt{\ln t}$ in the remaining graph at that time. 
Since the neighborhood of $v_i$ is $K_{t-1}$-minor free, the number of cliques 
added at the $i$-th step is at most 
\[
	\sum_{j=0}^{t-2} \binom{\lfloor c t \sqrt{\ln t}\rfloor}{j} 
	\le t(e c \sqrt {\ln t})^t
	\le 2^{c' t \ln \ln t},
\]
proving that $n$-vertex graphs with no $K_t$-minor have at most $2^{c' t \ln \ln t} n$ cliques.
One can similarly show that $n$-vertex graphs with no $K_t$-subdivision have at most $2^{c'' t \ln t}n$ cliques by using the following theorem mentioned in the introduction. 
Both of these bounds on the number of cliques were first proved in~\cite{FoOuTh} using a different argument.\footnote{The short proof presented in this paper is due to the second author and D.~Wood [private communication at the Barbados workshop on structural graph theory, Bellairs Research institute, 2013].}

\begin{thm}[{\cite[Theorem 7.2.1]{Diestel2010}}] \label{thm:extremal_subdivision}
For all $t \ge 1$, every graph of average degree at least $10t^2$ contains a $K_t$-subdivision.
\end{thm}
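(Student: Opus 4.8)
The plan is to derive the theorem from the Thomas--Wollan linkage theorem, which is the source of the constant~$10$. Recall that a graph $G$ is \emph{$k$-linked} if $\abs{V(G)}\ge 2k$ and for every choice of $2k$ distinct vertices $s_1,\dots,s_k,u_1,\dots,u_k$ there exist vertex-disjoint paths $P_1,\dots,P_k$ with $P_i$ joining $s_i$ to $u_i$; Thomas and Wollan proved that every $2k$-connected graph with at least $5k\abs{V(G)}$ edges is $k$-linked. Setting $k=\binom{t}{2}$, a $K_t$-subdivision is exactly what a suitable linkage produces: reserve $t$ branch vertices $b_1,\dots,b_t$, assign to each $b_i$ a set of $t-1$ private neighbors $x_{i,j}$ (one for each $j\ne i$, all $t(t-1)=2k$ of them distinct), and then link $x_{i,j}$ to $x_{j,i}$ for every pair $\{i,j\}$. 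The $\binom{t}{2}$ vertex-disjoint linking paths, together with the edges $b_ix_{i,j}$, realize the $\binom{t}{2}$ branches of a subdivided $K_t$.

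The work therefore reduces to producing, inside any graph of average degree at least $10t^2$, a subgraph $H$ on which the hypotheses of the linkage theorem hold for $k=\binom{t}{2}$, that is, $H$ is $2k$-connected and has at least $5k\abs{V(H)}$ edges, and which moreover contains $t$ vertices of degree at least $t-1$ to serve as branch vertices. First I would pass to a subgraph of minimum degree at least $5t^2$, which exists because a graph of average degree $d$ always contains a subgraph of minimum degree greater than $d/2$; this already guarantees at least $5k\abs{V(H)}$ edges since $10\binom{t}{2}=5t(t-1)\le 5t^2$, and it supplies an abundance of vertices of degree at least $t-1$. Then I would extract a $2k$-connected subgraph by a Mader-type argument, passing to a subgraph that is vertex-minimal subject to keeping the edge density above the threshold, since minimality forces high connectivity, and checking that the density required by the linkage theorem survives this extraction --- precisely the point at which the gap between the available $5t^2$ and the required $5t(t-1)$ leaves room to spare.

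With $H$ in hand, I would choose the branch vertices and their private neighbors greedily, the large minimum degree making the $2k$ distinctness constraints easy to satisfy, delete the $t$ branch vertices so that the linking paths cannot run through them --- the loss of $t\ll k$ vertices being absorbed by the connectivity and density margins --- and apply the linkage theorem to obtain the $\binom{t}{2}$ disjoint paths joining the designated pairs. Reassembling these paths with the branch edges yields the desired $K_t$-subdivision.

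I expect the main obstacle to be the constant bookkeeping of the second paragraph: coordinating the factor-of-two loss in passing to a minimum-degree subgraph, the further loss in extracting a $2k$-connected piece, and the cost of deleting the branch vertices, all while keeping the edge density above the sharp $5k\abs{V(H)}$ threshold demanded by the linkage theorem. It is the sharpness of the Thomas--Wollan bound that drives the final constant down to $10$; a softer connectivity-and-density extraction in the style of Bollob\'as--Thomason or Koml\'os--Szemer\'edi would reprove the statement with $10$ replaced by a larger absolute constant.
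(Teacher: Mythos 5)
The paper does not prove this theorem---it is quoted from Diestel's book (Theorem~7.2.1), whose proof is exactly the derivation you sketch: pass to a highly connected, dense subgraph via Mader's theorem, set $k=\binom{t}{2}$, choose $t$ branch vertices with $t(t-1)$ distinct private neighbours, delete the branch vertices, and apply the Thomas--Wollan linkage theorem to join the designated pairs. Your outline is correct and the bookkeeping works out as you anticipate (one should extract a subgraph of connectivity about $t^2$, comfortably more than $2k+t$, so that removing the $t$ branch vertices still leaves a $2k$-connected graph of edge density at least $5k$), so this matches the intended proof.
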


In this section, we show how a more detailed analysis of the algorithm gives
an improved bound on the number of cliques for graphs with no $K_t$-subdivision.

\subsection{Enumerating cliques}\label{subsec:clique}

The algorithm introduced above provides a natural tree structure, 
called the \emph{clique search tree}, to the cliques of a given graph $G=(V,E)$,
where each node of the tree corresponds to one step of exploration in the algorithm, 
and at the same time, one clique of the graph.
Formally, the clique search tree is a labelled tree 
defined as follows (since we are simultaneously considering
two graphs, we denote the vertices of $G$ by $v,w,\ldots,$ while we denote
the vertices of the tree by $a,b,\ldots$ and refer to them as nodes):

\begin{itemize}
  \setlength{\itemsep}{0pt} \setlength{\parskip}{.3em}
\item[1.] Start with a tree having a single node $a_0$ as a root node with label $L_{a_0} = V$.
\item[2.] Choose a leaf node $a$ of the current tree with $L_a \neq \emptyset$
and let $L := L_a$.
\begin{itemize}
  \setlength{\itemsep}{0pt} \setlength{\parskip}{.1em}
\item[2-1.] Choose a vertex $v \in L$ of minimum degree in $G[L]$.
\item[2-2.] Add a child node $b$ to $a$ in the tree and label it by the set $L_b = L \cap N(v)$.
\item[2-3.] Define $L \leftarrow L - \{v\}$.
\item[2-4.] Repeat Steps 2-1, 2-2 and 2-3 until $L = \emptyset$.
\end{itemize}
\item[3.] Repeat Step 2, until all leaves have label $\emptyset$.
\end{itemize}
Denote this tree as $T_G$. Thus $T_G$ is a rooted labelled tree, where each node $a$ is
labelled by some set $L_a \subseteq V(G)$ (distinct nodes might receive the same label). 
Note that the number of cliques in $G$ is exactly $\abs{V(T_G)}$, since there exists a one-to-one correspondence between nodes of $T_G$ and cliques of $G$. (The root node of $T_G$ corresponds to the empty set, which is also a clique by definition.) Hence to count cliques of $G$, it suffices to count nodes of $T_G$. 

The following proposition lists some 
useful properties of the tree $T_G$. A subtree $T'$ of $T_G$ is
a \emph{rooted subtree} if $T'$ contains the root node of $T_G$. The \emph{boundary nodes}
of a rooted subtree $T'$ is the set of nodes of $T'$ that are adjacent in $T_G$ to a node not in $T'$.

\begin{prop} \label{prop:tree_property}
If $G$ is a graph with no clique of size $t$, then the clique search tree $T_G$ has the following properties.
\begin{enumerate}[\rm (i)]
\item The number of nodes of $T_G$ is equal to the number of cliques of $G$. 
Moreover, for all non-negative integers $\ell$, the number of nodes of $T_G$ that are at distance 
exactly $\ell$ from the root node is equal to the number of cliques of $G$
of size $\ell$. 
\item For each node $a$ of $T_G$, the tree $T_{G[L_a]}$ is isomorphic (as a rooted
labelled tree) to the subtree of $T_G$ induced on $a$ and its descendents.
\item If $b$ is a descendent of $a$, then $L_b \subsetneq L_a$.
\item Let $T'$ be a rooted subtree of $T_G$ 
whose boundary nodes are all labelled by sets of size at most $m$. 
Then 
\[\abs{V(T_G)} \le \abs{V(T')} \cdot \sum_{i=0}^{t-1} \binom{m}{i} \le \abs{V(T')} 2^{m}.\]
\end{enumerate}
\end{prop}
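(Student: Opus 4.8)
The plan is to treat parts (i)--(iii) as structural bookkeeping and to concentrate the real work on part (iv). Throughout I fix a total order on $V(G)$ once and for all and break all ties in the ``minimum degree'' choice of Step 2-1 according to this order, so that the tree $T_G$ is genuinely well defined. For part (i) I would attach to each node $a$ the set $C_a$ of vertices selected in Step 2-1 along the root-to-$a$ path (so $C_{a_0}=\emptyset$); the distance from the root to $a$ equals $\abs{C_a}$, so it suffices that $a\mapsto C_a$ be a cardinality-preserving bijection onto the cliques of $G$. That $C_a$ is a clique of size equal to the distance follows from part (iii): writing $v_1,\dots,v_k$ for the selected vertices, each $v_j$ lies in its parent's label, which by (iii) sits inside the label of every earlier node and hence inside $N(v_i)$ for every $i<j$, forcing the $v_i$ to be distinct and pairwise adjacent. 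Bijectivity is the assertion, noted already in the text, that the algorithm lists each clique exactly once: given a clique $C$, the realizing path is determined step by step by descending to the child that selects the vertex of $C$ chosen earliest at the current node, which exists and is unique, yielding surjectivity and injectivity simultaneously.

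Part (ii) is immediate from the recursive nature of the construction. Building the subtree below a node $a$ only ever inspects $G$ through the induced subgraphs $G[L]$ with $L\subseteq L_a$, and for such $L$ one has $G[L_a][L]=G[L]$ and $N_{G[L_a]}(v)=L_a\cap N_G(v)$; hence the minimum-degree selections and the child labels produced below $a$ coincide verbatim with those produced when the algorithm is run on $G[L_a]$ from scratch, and a short induction on $\abs{L_a}$ gives the claimed rooted-labelled isomorphism. For part (iii), if $b$ is a child of $a$, created by selecting $v$ from a working set $L\subseteq L_a$ with $v\in L$, then $L_b=L\cap N(v)\subseteq L_a$ while $v\in L_a\setminus L_b$ (as $v\notin N(v)$), so $L_b\subsetneq L_a$; the general case follows by composing these proper inclusions along the path from $a$ to $b$.

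The crux, and the step I expect to be the main obstacle, is part (iv). Write $N:=\sum_{i=0}^{t-1}\binom{m}{i}$ and define a map $\phi\colon V(T_G)\to V(T')$ sending each node $x$ to the \emph{deepest} node of $T'$ lying on the path from the root to $x$; this is well defined because the root belongs to $T'$. The fibers of $\phi$ partition $V(T_G)$, so it is enough to bound each $\abs{\phi^{-1}(a)}$ by $N$. If $a\in V(T')$ is not a boundary node, then every $T_G$-child of $a$ already lies in $T'$, which forces $\phi^{-1}(a)=\{a\}$, so $\abs{\phi^{-1}(a)}=1\le N$. If $a$ is a boundary node, then $\phi^{-1}(a)$ is contained in the subtree $S_a$ of $T_G$ rooted at $a$; by part (ii) this subtree is isomorphic to $T_{G[L_a]}$, and by part (i) applied to $G[L_a]$ its number of nodes equals the number of cliques of $G[L_a]$. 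Since $G$, and therefore $G[L_a]$, has no clique of size $t$, and since $\abs{L_a}\le m$, these cliques number at most $\sum_{i=0}^{t-1}\binom{\abs{L_a}}{i}\le N$. Summing $\abs{\phi^{-1}(a)}\le N$ over the $\abs{V(T')}$ nodes of $T'$ yields $\abs{V(T_G)}\le\abs{V(T')}\cdot N$, and the remaining inequality is the trivial $\sum_{i=0}^{t-1}\binom{m}{i}\le\sum_{i=0}^{m}\binom{m}{i}=2^m$.

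The one subtle point is the case analysis bounding the fibers: interior nodes of $T'$ must be shown to contribute only themselves, so that the entire excess of $T_G$ over $T'$ is charged to the boundary subtrees, and each such subtree is small \emph{precisely} because the boundary hypothesis is exactly what supplies $\abs{L_a}\le m$. Getting a clean product bound $\abs{V(T')}\cdot N$, rather than a weaker sum, hinges on this singleton-fiber observation, so I would verify carefully that boundary and non-boundary nodes genuinely exhaust $V(T')$ and that $\phi$ never sends a node outside the subtree of its deepest $T'$-ancestor.
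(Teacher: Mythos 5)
Your proposal is correct and follows essentially the same route as the paper: parts (i)--(iii) are read off from the recursive construction, and part (iv) is obtained by charging every node of $T_G$ either to itself (if in $T'$) or to the boundary node of $T'$ above it, bounding each boundary node's subtree via (ii) and (i) by the number of cliques of $G[L_a]$, which is at most $\sum_{i=0}^{t-1}\binom{m}{i}$. Your fiber map $\phi$ is just a more explicit packaging of the paper's observation that every node of $T_G$ is either in $T'$ or a descendant of a boundary node, and the singleton-fiber check for interior nodes that you flag as the subtle point is exactly what makes the paper's product bound go through.
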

\begin{proof}
Properties (i), (ii), and (iii) follow from the definition and the
discussions given above. To prove Property (iv),
suppose that we are given a tree $T' \subseteq T_G$. 
Since $T'$ is a rooted subtree, each node in $T_G$ is
either in $T'$ or is a descendant of a boundary node of $T'$.
Furthermore, by Properties (i) and (ii), each boundary node of $T'$ has at most
$\sum_{i=1}^{t-1} \binom{m}{i}$ descendants in $T_G$. Hence 
\[
	\abs{V(T_G)}
	\le \abs{V(T')} + \sum_{a \,: \, \text{boundary of } T'} \sum_{i=1}^{t-1} \binom{m}{i}
	\le \abs{V(T')} \cdot \sum_{i=0}^{t-1} \binom{m}{i}.
        \qedhere
\]
\end{proof}

\subsection{Graphs of large minimum degree}%

The simple argument given in the beginning of this section that proves the bound $2^{c't \ln \ln t} n$ for $K_t$-minor free graphs is equivalent to applying Proposition~\ref{prop:tree_property}~(iv) to the subtree induced on the root of $T_G$ and its children. Hence to improve 
on this bound, it would be useful to find a small rooted subtree $T' $ of $ T_G$ 
whose boundary nodes are all labelled by small sets. When does such a subtree exist?

A graph $G$ is called \emph{$(\beta, N)$-locally sparse} if every set $X$
of at least $N$ vertices has a vertex $v \in X$ of degree at most
$\beta \abs{X}$ in $G[X]$.\footnote{It is more common to define a $(\beta,N)$-locally sparse graph as a graph satisfying the following slightly stronger property: each subset $X$ of size at least $N$ contains at most $\beta \abs{X}^2$ edges.} This concept was first introduced by Kleitman and Winston~\cite{KlWi} 
in their study of the number of $C_4$-free graphs on $n$ vertices, and
has been successfully applied to several problems in extremal combinatorics.

In the following two lemmas, we utilize the concept of locally sparse graphs to 
handle a subcase of our theorem when the graph is small and dense.
This subcase turns out to be an important ingredient in the proof of general cases.

\begin{lem}\label{lem:subdiv}
  Let $G$ be an $m$-vertex graph with no $K_t$-subdivision. If $G$ has minimum degree at least $\frac{9}{10}m$, then $m \le \max\{\frac{20}{11}t, \frac{t^2}{5}\}$ and $G$ is $(1-\frac{m}{2t^2},\frac{20}{11}t)$-locally sparse.
\end{lem}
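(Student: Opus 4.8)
The plan is to prove the local sparsity statement first and then derive the bound on $m$ as the special case $X=V$. Throughout, the only global fact I use about $G$ is that the minimum degree hypothesis $\delta(G)\ge\frac{9}{10}m$ forces every two vertices $u,w$ to have many common neighbors: whenever $u\not\sim w$ we have $\abs{N(u)\cup N(w)}\le m-2$, so $\abs{N(u)\cap N(w)}\ge 2\cdot\frac{9}{10}m-(m-2)=\frac{4}{5}m+2$. The whole argument rests on turning a set of $t$ almost-pairwise-adjacent vertices into a $K_t$-subdivision by routing each missing edge through one of these abundant common neighbors.

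For the local sparsity, suppose toward a contradiction that some $X$ with $p:=\abs{X}\ge\frac{20}{11}t$ satisfies $\delta(G[X])>(1-\frac{m}{2t^2})p$. First I would note this forces $mp>2t^2$ (otherwise the right-hand side is at least $p-1$, and no graph on $p$ vertices has minimum degree exceeding $p-1$). Hence $\overline{G}[X]$ has maximum degree below $\frac{mp}{2t^2}$, so $G[X]$ spans fewer than $\frac{mp^2}{4t^2}$ non-edges. Averaging over the $t$-subsets of $X$, some $S\subseteq X$ of size $t$ spans at most $\frac{\binom{t}{2}}{\binom{p}{2}}\cdot\frac{mp^2}{4t^2}=\frac{(t-1)mp}{4t(p-1)}$ non-edges; since $p>t$ this quantity is less than $\frac{m}{4}$, and I write $s$ for the actual number of non-edges in $S$, so $s<\frac{m}{4}$.

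The crux, and the step I expect to be the main obstacle, is to upgrade $S$ into a $K_t$-subdivision. I would order the at most $s$ non-adjacent pairs of $S$ and process them one at a time, assigning to each such pair a common neighbor (taken from all of $V$) that is distinct from $S$ and from all previously chosen connectors; an adjacent pair is joined directly by its edge. When a pair is processed, fewer than $t+s$ vertices are forbidden, while the pair has at least $\frac{4}{5}m+2$ common neighbors, so a valid connector exists as long as $t+s<\frac{4}{5}m$. This is exactly where the constants lock together: $m\ge\frac{20}{11}t$ gives $t\le\frac{11}{20}m$, and combined with $s<\frac{1}{4}m$ this yields $t+s<\frac{11}{20}m+\frac{5}{20}m=\frac{4}{5}m$. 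The branch set $S$, the direct edges, and the length-two connector paths then form a $K_t$-subdivision in $G$, contradicting the hypothesis; so $G$ is $(1-\frac{m}{2t^2},\frac{20}{11}t)$-locally sparse.

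Finally I would deduce the bound on $m$. If $m<\frac{20}{11}t$ there is nothing to prove, so assume $m\ge\frac{20}{11}t$ and apply the local sparsity just established to $X=V$: some vertex $v$ has degree at most $(1-\frac{m}{2t^2})m$ in $G$. Comparing with $\delta(G)\ge\frac{9}{10}m$ forces $\frac{9}{10}\le 1-\frac{m}{2t^2}$, i.e.\ $m\le\frac{t^2}{5}$, so in all cases $m\le\max\{\frac{20}{11}t,\frac{t^2}{5}\}$. The two points to get right are the averaging bound $s<\frac{m}{4}$ (which quietly uses $p>t$) and checking that the greedy routing never exhausts a common neighborhood; the remainder is bookkeeping with the calibrated constants $\frac{9}{10}$, $\frac{20}{11}$, and $\frac{t^2}{5}$.
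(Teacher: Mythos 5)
Your proposal is correct and follows essentially the same route as the paper: an averaging argument over $t$-subsets of $X$ to find a $t$-set spanning fewer than $\frac{m}{4}$ non-edges (you count non-edges where the paper counts edges, which is equivalent), then greedy routing of each missing edge through one of the at least $\frac{4}{5}m$ common neighbors guaranteed by the minimum-degree hypothesis, with the same constant check $\frac{11}{20}m - t \ge 0$. The final deduction $m \le \frac{t^2}{5}$ by applying local sparsity to $X = V$ is also exactly the paper's closing step.
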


\begin{proof}
We may assume that $m \ge \frac{20}{11}t$, since otherwise the lemma is vacuously true.
Let $X$ be a subset of vertices of size $\abs{X}\ge \frac{20}{11}t$ and suppose that $G[X]$ has minimum degree at least $(1-\frac{m}{2t^2})\abs{X}$ (note that this quantity may be negative). If we sum $e(Y)$, the number of edges in $Y$, over all $t$-element subsets $Y$ of $X$, then each edge in $X$ is counted exactly $\binom{\abs{X}-2}{t-2}$ times. Therefore there exists a $t$-element subset $Y$ of $X$ such that 
  \begin{align*}
    e(Y) &\ge \frac{\binom{\abs{X}-2}{t-2} e(X)}  {\binom{\abs X}{t}}\\
    &=\binom{t}{2} \frac{e(X)}{\binom{\abs X}{2}}\\
    &> \binom{t}{2} \left(1 -\frac{m}{2t^2}\right)>\binom{t}{2}-\frac{m}{4}.
  \end{align*}
  
  Since every vertex of $G$ has degree at least $\frac{9}{10}m$, 
  every pair of vertices  of $G$  has at least $\frac{4}{5}m$ common neighbors. 
  For each non-edge $e = \{v, v'\}$ in $Y$, we can greedily 
  find a common neighbor  $w_e \in V(G)\setminus Y$ of $v$ and $v'$
  such that all chosen $w_e$  for all non-edges $e$ are distinct,
  because $Y$ has at most $\frac{m}{4}$ non-edges in $Y$
  and
  \[
  \frac{4}{5}m -\left( t+\frac{m}{4}\right)
  =\frac{11}{20}m-t \ge 0.
  \]
  Then $Y$ together with all chosen $w_e$ induces a $K_t$-subdivision in $G$, contradicting our assumption.
  Therefore, $G$ is $(1-\frac{m}{2t^2},\frac{20}{11}t)$-locally sparse. 
  Since $G$ has minimum degree at least $\frac{9}{10}m$, if $m \ge \frac{20}{11}t$, then this implies 
  \[
  	\frac{9}{10}m \le \left(1-\frac{m}{2t^2}\right)m,
  \]
  from which $m \le \frac{t^2}{5}$ follows.
\end{proof}

\begin{lem}\label{lem:subdivision_to_sum}
  Let $G$ be an $m$-vertex graph with no $K_t$-subdivision
  with $m\le \frac{t^2}{5}$. 
  If $G$ is $(1-\frac{m}{2t^2},\frac{20}{11}t)$-locally sparse,
  then $G$ contains less than $2^{5t}$ cliques.
\end{lem}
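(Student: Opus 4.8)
The plan is to count cliques by size using the clique search tree $T_G$ and the greedy minimum-degree order it records. Since $G$ has no $K_t$-subdivision it has no $K_t$ subgraph, so every clique has size at most $t-1$ and $T_G$ has depth at most $t-1$; writing $a_\ell$ for the number of $\ell$-cliques, the count is $\sum_{\ell=0}^{t-1}a_\ell$. If $m\le 5t$ this is already at most $\sum_{\ell<t}\binom m\ell\le 2^{m}\le 2^{5t}$ (and is strict because $\ell<t\le m$ omits terms), so I may assume $m$ is large; the whole point is to use local sparseness to beat the trivial bound $2^m$. List the greedy order $v_1,\dots,v_m$ with forward degrees $d_i$ and remaining-set sizes $r_i=m-i+1$. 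Decomposing a clique by its first vertex gives the recursion $a_\ell=\sum_{i}(\text{number of }(\ell-1)\text{-cliques in }G[\mathrm{fwd}(v_i)])$, and local sparseness supplies $d_i\le\beta r_i$ with $\beta=1-\frac m{2t^2}$, valid whenever $r_i\ge N:=\frac{20}{11}t$.

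The heart of the argument is the estimate $a_\ell\le\binom m\ell\beta^{\binom\ell2}$, which is exactly the expected number of $\ell$-cliques in the random graph $G(m,\beta)$. I would prove it by induction on $\ell$ along the tree: applying the induction hypothesis to each $G[\mathrm{fwd}(v_i)]$ (an induced, hence $(\beta,N)$-locally sparse, subgraph on $d_i$ vertices) yields $a_\ell\le\beta^{\binom{\ell-1}2}\sum_i\binom{d_i}{\ell-1}$. Two clean inequalities then close it: the factorwise bound $\binom{\beta r}{\ell-1}\le\beta^{\ell-1}\binom r{\ell-1}$ (since $\beta r-k\le\beta(r-k)$ for $\beta\le1$), and the hockey-stick identity $\sum_{r}\binom r{\ell-1}=\binom{m+1}\ell$, giving $a_\ell\le\beta^{\binom\ell2}\binom{m+1}\ell$ — the harmless $m+1$ accumulates only an $O(t)$ shift across the $\le t$ levels and does not affect the final calculus.

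The one place this breaks is small sets: once a neighborhood falls below $N$ vertices local sparseness no longer bites, and a small dense set genuinely has $\binom{d}{\ell-1}$ cliques with no $\beta$-saving. I would therefore truncate. Let $T'$ be the rooted subtree of all nodes whose proper ancestors all have label of size at least $N$. On $T'$ every branching takes place inside a set of size $\ge N$, so the induction above runs cleanly and gives $\abs{V(T')}\le\sum_{\ell}\binom{m+1}\ell\beta^{\binom\ell2}$. The boundary nodes of $T'$ have labels of size at most $N-1$, so Proposition~\ref{prop:tree_property}(iv) converts this into a bound for the whole tree, $\abs{V(T_G)}\le 2^{N-1}\abs{V(T')}$. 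This is exactly where the constant splits as $5=\frac{20}{11}+\frac{35}{11}$: the factor $2^{N-1}<2^{20t/11}$ absorbs all the small tails, and it remains to show $\sum_{\ell<t}\binom{m+1}\ell\beta^{\binom\ell2}<2^{35t/11}$.

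For the final sum I would bound each term by $\frac{m^\ell}{\ell!}e^{-\frac m{2t^2}\binom\ell2}$ (using $\beta\le e^{-m/(2t^2)}$); its logarithm is concave in $\ell$ with maximizer $\ell^*$ determined by $\ln(m/\ell^*)=\frac{m\ell^*}{2t^2}$. Setting $y=\frac{m\ell^*}{2t^2}$ (so $ye^y=\frac{m^2}{2t^2}$ and hence $m=t\sqrt{2ye^y}$), the maximal term has exponent $\frac t{\sqrt2}\,\sqrt y\,(y+2)\,e^{-y/2}$; since $\sqrt y(y+2)e^{-y/2}$ is maximized at $y=2$ with value $4\sqrt2/e<2.1$, the peak is below $1.5t$, comfortably under $\frac{35t}{11}\ln 2\approx2.2t$, and the at most $t$ terms are absorbed by the slack. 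The main obstacle I expect is making this last optimization uniform over all admissible $m\le t^2/5$ and all $t$ — in particular checking that the maximizer satisfies $\ell^*<t$ (which reduces to the elementary inequality $2z<e^z$ via $z=\tfrac m{2t}$) so the peak lies in the legal range, and controlling the polynomial factor from the number of terms — together with the careful bookkeeping at the size-$N$ boundary that justifies the truncation and the application of Proposition~\ref{prop:tree_property}(iv).
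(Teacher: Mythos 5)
Your overall architecture is sound and genuinely different from the paper's. The paper also truncates the clique search tree and applies Proposition~\ref{prop:tree_property}(iv), but it truncates by \emph{depth} $D=\lfloor 2\frac{t^2}{m}\ln\frac{m}{t}\rfloor$ (showing labels at that depth have already shrunk below $\frac{20}{11}t$) and then uses only the crude count $\abs{V(T')}\le\sum_{i\le D}\binom{m}{i}\le(em/D)^D<2^{3.13t}$; it never needs a per-level $\beta^{\binom{\ell}{2}}$ saving. You instead truncate by label size and invoke a Kleitman--Winston-type moment bound $a_\ell\le\binom{m}{\ell}\beta^{\binom{\ell}{2}}$. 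Your final optimization (peak exponent $\tfrac{4}{e}t\approx 1.47t$ nats at $y=2$, versus the budget $\tfrac{35}{11}t\ln 2\approx 2.21t$) is correct and leaves ample slack, and the split $5=\tfrac{20}{11}+\tfrac{35}{11}$ matches the paper's $1.82+3.13$.

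There is, however, one concrete gap: the claim that the $m+1$ in the hockey-stick step ``accumulates only an $O(t)$ shift across the $\le t$ levels.'' It does not accumulate additively. Tracking the induction, if the hypothesis at level $\ell-1$ carries a shift $c_{\ell-1}$, then the step $\binom{\beta r+c_{\ell-1}}{\ell-1}\le\beta^{\ell-1}\binom{r+c_{\ell-1}/\beta}{\ell-1}$ forces the recursion $c_\ell=1+c_{\ell-1}/\beta$, so $c_\ell\approx\frac{2t^2}{m}\beta^{-\ell}$ grows \emph{geometrically} with ratio $\beta^{-1}=(1-\frac{m}{2t^2})^{-1}$. Run to depth $\ell$ close to $t$ with $m$ near $t^2/5$ this gives $c_\ell\approx 10\, e^{t/10}$, which dwarfs $m$ and destroys the bound $\binom{m+c_\ell}{\ell}\beta^{\binom{\ell}{2}}$ exactly where you want to use it. The repair is available but is an idea you have not stated: inside your $T'$ every internal label has size at least $N=\frac{20}{11}t$, so the depth of $T'$ is at most roughly $\log_{1/\beta}(m/N)\approx\frac{2t^2}{m}\ln\frac{m}{t}$, whence $\beta^{-\ell}=O(m/t)$ and $c_\ell=O(t)$ after all; for $\ell$ beyond that depth the true count is zero and the inductive bound is irrelevant. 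Note that once you have extracted this depth bound you are standing exactly where the paper's proof starts, and the simpler estimate $\sum_{i\le D}\binom{m}{i}$ already closes the argument without the $\beta^{\binom{\ell}{2}}$ refinement. So: viable alternative route, but the ``harmless accumulation'' step needs the depth-of-$T'$ observation to be made explicit, and as justified in your write-up it is false.
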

\begin{proof}
  If $m< 5t$, then trivially $G$ contains less than $2^{5t}$ cliques and therefore we may assume that $m\ge 5t$.
  Let $T_G$ be the clique search tree of $G$ and let $T'$ be the subtree of $T_G$ obtained by taking all nodes of distance at most $\lfloor 2\frac{t^2}{m}\ln \frac{m}{t}\rfloor$ from the root. 
  Then by the local sparsity condition, 
  the label set of each boundary node of $T'$ has cardinality less than $\max\{\frac{20}{11}t, \frac{10}{9}t\}=\frac{20}{11}t$, because 
  \[
  \left(1-\frac{m}{2t^2}\right)^{\lfloor 2\frac{t^2}{m}\ln \frac{m}{t}\rfloor } m
  < \frac{ e^{-\ln \frac{m}{t}} m}{ 1- \frac{m}{2t^2}} \le \frac{10}{9}t,
  \]
  where the last inequality follows from $m\le \frac{t^2}{5}$. By Proposition~\ref{prop:tree_property}~(i), the number of nodes of $T'$ is at most the number of cliques of size at most $\lfloor 2\frac{t^2}{m}\ln \frac{m}{t}\rfloor$ and so we have the following inequality:
  \[
  \abs{V(T')}\le \sum_{i=0}^{\lfloor 2\frac{t^2}{m}\ln \frac{m}{t}\rfloor} \binom{m}{i}.
  \]
  As $\frac{\ln x}{x}\le \frac{1}{e}$ for all $x>0$,  $  	2\frac{t^2}{m}\ln \frac{m}{t}\le \frac{2t}{e}<m$.
  Since $\sum_{i=0}^{\lfloor k\rfloor }\binom{m}{i}\le
\sum_{i=0}^{\lfloor k\rfloor  } \binom{m}{i} \left(\frac{k}{m}\right)^{i-k} 
\le \left(\frac{m}{k}\right)^k (1+\frac{k}{m})^n
\le \left( \frac{em}{k}\right)^k$ for all $k\le m$, we have 
\[
    \abs{V(T')}
    \le \left(\frac{em}{2\frac{t^2}{m}\ln \frac{m}{t}}\right)^{2\frac{t^2}{m}\ln \frac{m}{t}} 
    \le 
    \left(\frac{m^2}{t^2}\right)^{2\frac{t^2}{m} \ln \frac{m}{t}} 
    = e^{4t \frac{\ln^2(m/t)}{m/t}}.
  \]
  because  $2\ln \frac{m}{t}\ge 2\ln 5>e$.
  As $\frac{\ln^2 x}{x}\le \frac{4}{e^2}$ for all $x>1$, 
  \[
    \abs{V(T')} \le e^{\frac{16}{e^2} t} < 2^{3.13t}.
    \]
 Since the label set of each boundary node of $T'$ has cardinality less than $\frac{20}{11}t$, 
 by Proposition~\ref{prop:tree_property}~(iv),
 \[
	\abs{V(T_G)} \le \abs{V(T')} \cdot 2^{\frac{20}{11}t}<\abs{V(T')}2^{1.82t}.
 \]
 It follows that $G$ has at most $ 2^{(3.13+1.82)t}<2^{5t}$ cliques.
\end{proof}

\subsection{Finishing the proof}

In this subsection, we prove Theorem~\ref{thm:main}.

  We may assume that $t\ge 4$, because otherwise $G$ is a forest and
  contains at most $2n$ cliques.
  Given a graph $G$ with no $K_t$-subdivision, let $T_G$ be its clique search tree. 
  By Theorem \ref{thm:extremal_subdivision}, $G$ is $10t^2$-degenerate. Therefore
  every non-root node has a label set of cardinality at most $10t^2$,
  and thus has at most $10t^2$ children.
    
We construct a rooted subtree~$T'$ of the clique search tree $T_G$ according to the following recursive rule. 
First take the root node. Then for a node $a$ in $T'$,
take its child $a'$ to be in $T'$  if $\sqrt{10}t\le \abs{L_{a'}} < \frac{9}{10}\abs{L_a}$.
Since the label set of every non-root node has cardinality at most $10t^2$ and the 
cardinality of the label sets 
decrease by a factor of at least $\frac{9}{10}$ at each level, we see that
$T'$ is a tree of height at most $1 + \frac{\ln (10t^2)}{2\ln (10/9)}$.
Since the root of $T_G$ has exactly $n$ children, the number of nodes of $T'$ 
satisfies
\begin{align}
	\abs{V(T')}
	&\le n \cdot (10t^2)^\frac{\ln (10t^2)}{2\ln (10/9)}
	=n\cdot 2^
  \frac{ \ln^2 (10t^2)}{2\ln (10/9)\ln 2}\nonumber\\
&\label{eq:eq_3}\le n\cdot 2^  \frac{ t\ln^2 (160)}{8\ln (10/9)\ln 2}<2^{44.1t}n,
\end{align}
where the second to last inequality follows from the fact that $t \ge 4$
and $\frac{\ln^2( 10x^2)}{x}$ is decreasing for $x>\frac{e^2}{\sqrt{10}}$.

Further note that for each boundary node $a$ of $T'$, either $\abs{L_a}\le \sqrt{10}t$, or 
there exists a child $a'$ of $a$ for which $\abs{L_{a'}} \ge \frac{9}{10}\abs{L_a}$.
In the first case, the number of descendants of $a$ in $T_G$ is clearly at most $2^{\sqrt{10}t}$, which is less than $2^{5t}$.
In the latter case, let $v_1, v_2, \ldots, v_{\abs{L_a}}$ be the vertices in $L_a$ 
listed in the order that they were chosen by the algorithm, and let $a_1, a_2, \ldots, a_{|L_a|}$ 
be the corresponding nodes of $T_G$. Suppose that $i$ is the minimum index for which
$|L_{a_i}| \ge \frac{9}{10}\abs{L_a}$.
Define $X_a = \{v_i, v_{i+1}, \ldots, v_{\abs{L_a}}\}$ and
let $G_a = G[X_a]$. Notice that the clique search tree $T_{G_a}$ 
is isomorphic to the
subtree of $T_G$ induced on $a$, $a_i, \ldots, a_{\abs{L_a}}$, and 
the descendants of $a_i, a_{i+1}, \ldots, a_{\abs{L_a}}$ in $T_G$.
Hence, the total number of nodes of $T_G$ is at most 
\begin{align} 
	\abs{V(T_G)} 
	&\le \abs{V(T')} + \sum_{a \,:\, \text{boundary of } T'} (\abs{V(T_{G_a})} - 1) \nonumber \\
	&\le \abs{V(T')} \cdot \max_{a \,:\, \text{boundary of } T'} \abs{V(T_{G_a})}. \label{eq:eq_2}
\end{align}

By the definition of our algorithm, the vertex $v_i$ is a vertex of minimum degree
in the graph $G_a$, and hence
$G_a$ has minimum degree at least $|L_{a_i}| \ge \frac{9}{10}\abs{L_a} \ge \frac{9}{10}\abs{X_a}$.
By Lemma~\ref{lem:subdiv}, $G_a$ is $(1-\frac{|X_a|}{2t^2},\frac{20}{11}t)$-locally sparse
and $|X_a| \le \max\{\frac{20}{11}t, \frac{1}{5}t^2\}$. 
If $ |X_a| \le \frac{1}{5}t^2$, then $G_a$ satisfies the conditions of Lemma~\ref{lem:subdivision_to_sum},
and therefore the tree $T_{G_a}$ has at most $2^{5t}$ nodes.
Otherwise  $|X_a| \le \frac{20}{11}t$ and by Proposition~\ref{prop:tree_property}~(i), 
the tree $T_{G_a}$ has at most $2^{\frac{20}{11}t}$ nodes.
In either case, we have \[\abs{V(T_{G_a})} \le 2^{5t}.\] 
By substituting this bound
and \eqref{eq:eq_3} into \eqref{eq:eq_2}, we obtain the desired inequality
$\abs{V(T_G)} \le 2^{5t} \abs{V(T')} < 2^{(5+44.1)t}n<2^{50t}n$.

\section{Remarks} \label{sec:remarks}

In this paper, we proved Theorem \ref{thm:main} asserting that every 
$n$-vertex graph with no $K_t$-subdivision has at most $2^{50t}n$ cliques. 
In fact, our proof shows that such graphs have at most $2^{(5+o(1))t}n$ cliques, 
since \eqref{eq:eq_3} could have been replaced by the inequality $\abs{V(T')} \le 2^{o(t)} n$.

It remains to determine the best possible constants $c$ and $C$ for which
the number of cliques in an $n$-vertex graph with no $K_t$-subdivision is at most 
$2^{(c+o(1))t}n$ and at most $2^{Ct}n$. We showed that $c \le 5$ and $C\le 50$, while as mentioned in the
introduction, the $(t-2)$-th power of a path shows that $c \ge 1$.
Lemma~\ref{lem:subdiv} can be written as follows: 
if $G$ is an $m$-vertex $K_t$-subdivison-free graph
of minimum degree at least $(1 - \alpha)m$, then $m \le \max\{\frac{t}{1-2\alpha-\beta/2}, \frac{\alpha}{\beta}t^2\}$ and $G$ is $(1-\frac{\beta m}{t^2}, \frac{t}{1-2\alpha-\beta/2})$-locally sparse.
By taking $\alpha = 0.01$ and $\beta = 0.65$ and following an almost same proof, 
we can obtain $c<4$.
Similarly, by taking $\alpha=0.35$ and $\beta = 0.4$, we can obtain $C<20$.
(In the modified proof, when we compute an upper bound on the number of cliques in a graph on $\gamma t$ vertices,  
we may use the inequality $\sum_{i=0}^{t} \binom{\gamma t}{i}\le (\gamma e)^t$  instead of $2^{\gamma t}$ to achieve a better bound depending on $\gamma$.)

D.~Wood~\cite{Wood} showed that $c \ge \frac{2}{3} \log_2 3\approx 1.057$
because 
the complete $k$-partite graph $K_{2,2\ldots,2}$ contains $3^k$
cliques and has no $K_t$-subdivision for $t > \lfloor 3k/2 \rfloor$.

We remark that Kawarabayashi and Wood~\cite{KW2012a} proved that $n$-vertex graphs with no odd-$K_t$-minor have at most $O(n^2)$ cliques and unlike the case of graph minors, $n^2$ cannot be improved because $K_{n,n}$ has no odd-$K_3$-minor.

\medskip

\noindent \textbf{Acknowledgement.} We thank David Wood and the two anonymous 
referees for their valuable comments.

\end{document}